\newtheorem{thm}{Theorem}[section]
\newtheorem{lem}{Lemma}[section]
\newtheorem{cor}{Corollary}[section]
\newtheorem{remark}{Remark}[section]
\newcommand{\ve}{\varepsilon}
\newcommand{\1}{1\!\!\,{\rm I}}
\title{
	Random integral operators related to the point processes \\
}
\author{A. A. Dorogovtsev\footnote{ Institute of Mathematics,
		National Academy of Sciences of Ukraine
		Tereshchenkivska Str. 3, Kiev 01601, Ukraine,  \textit{andrey.dorogovtsev@gmail.com}} \;and Ia. A. Korenovska\footnote{ Institute of Mathematics,
		National Academy of Sciences of Ukraine
		Tereshchenkivska Str. 3, Kiev 01601, Ukraine, \textit{iaroslava.korenovska@gmail.com}}}
\begin{document}
	\maketitle
	
	\begin{abstract}
	In the article we study properties of the random integral operator in $L_2(
	\mathbb{R})$ whose kernel is obtained as a convolution of Gaussian density with a stationary point process.
	\end{abstract}

\maketitle
	
	\section{Introduction}
	Let $\Theta$ be a stationary point process on the real line \cite{1}. In this paper we consider integral operators in $L_2(\mathbb{R})$ with the kernel 
	\begin{equation}
	\label{eq1}
		 k(u,v)=\sum_{\theta\in\Theta}p(u-\theta)p(v-\theta),
	\end{equation}
	where $p$ is some square-integrable function. The necessity in the investigation of such random kernels arises in the theory of stochastic flows. Namely, in the articles \cite{2,3} the strong random operators related to an Arratia flow \cite{4} were introduced. If  $\left\{x(u,t), u\in\mathbb{R},t\ge0\right\}$   is an Arratia flow then for every $f\in L_2(\mathbb{R})$ and $t>0\;$ $ T_tf(u)=f(x(u,t)),\;u\in\mathbb{R},$ is a random element in $L_2(\mathbb{R})$. It was proved in \cite{2} that $T_t$ is a strong random operator in Skorokhod sense \cite{7} but it is not a bounded random operator \cite{3}. Since it is known that the map $ x(\cdot,t):\mathbb{R}\to\mathbb{R}$ is a step function with probability one then for any function $f$ with a bounded support $L_2 (\mathbb{R})$-norm of $T_tf$ equals to zero with positive probability. To avoid such situation one can consider $f\ast p_{\ve}$, where $p_{\ve}$ is a density of normal distribution with zero mean and variance $\ve$. Then, due to the change of variable formula for an Arratia flow \cite{3}, one can obtain 
	\begin{equation}
	\label{eq2}
	\int_{\mathbb{R}}T_t(f\ast p_{\ve})(u)^2du=\sum_{\theta:\;\Delta y(\theta,t)>0}\Delta y(\theta,t)\int_{\mathbb{R}}\int_{\mathbb{R}}p_{\ve}(v_1-\theta)p_{\ve}(v_2-\theta)f(v_1)f(v_2)dv_1dv_2,
	\end{equation}
	where $\left\{y(u,s), u\in\mathbb{R}, s\in [0;t]\right\}$ is a conjugated Arratia flow \cite{4}. In the right part of \eqref{eq2} one may see the quadratic form of the operator similar to \eqref{eq1}. Hence, the knowledge of the properties of \eqref{eq1} can help us in the investigation of random operators constructed from the stochastic flows. The article continues studying of characteristics of random operators from \cite{5,6}.
	
	\section{Shifts of Gaussian density along a point process }
	We will start with the following statement.
	
	\begin{thm}
		\label{lem1}
		Let $\Theta$ be a stationary ergodic point process on $\mathbb{R}$ \cite{1} and $ E|\Theta\cap\left[\left.0;1\right)\right.|<+\infty .$ Then there exists an event $\Omega_0$ of probability one such that for each $\omega\in \Omega_0$ a linear combinanations of the functions $\left\{ p_{\ve}(\cdot-\theta(\omega));\;\theta(\omega)\in\Theta(\omega)\right\}$ are dense in $L_2(\mathbb{R})$.
	\end{thm}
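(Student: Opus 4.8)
The plan is to prove density by showing that the orthogonal complement of the linear span is trivial for almost every $\omega$. Fix $\omega$ and suppose $f\in L_2(\mathbb{R})$ satisfies $f\perp p_\varepsilon(\cdot-\theta)$ for every $\theta\in\Theta(\omega)$. Since the Gaussian density is even, $\langle f,p_\varepsilon(\cdot-\theta)\rangle=(f*p_\varepsilon)(\theta)$, so the orthogonality condition says precisely that the function $F:=f*p_\varepsilon$ vanishes on the whole configuration $\Theta(\omega)$. Because $\widehat{p_\varepsilon}(\xi)=e^{-\varepsilon\xi^2/2}$ never vanishes, convolution with $p_\varepsilon$ is injective on $L_2(\mathbb{R})$; hence it suffices to prove that $F\equiv 0$, whence $f=0$. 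Thus the theorem reduces to a uniqueness statement: almost surely, the only function of the form $f*p_\varepsilon$, $f\in L_2(\mathbb{R})$, vanishing on $\Theta$ is the zero function.

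Next I would record the analytic properties of $F$. Writing $F(z)=\tfrac{1}{2\pi}\int\hat f(\xi)e^{-\varepsilon\xi^2/2}e^{iz\xi}\,d\xi$ shows that $F$ extends to an entire function, and a Cauchy--Schwarz estimate after completing the square gives the growth bound $|F(x+iy)|\le C_f\,e^{y^2/(2\varepsilon)}$; in particular $F$ is of order two and finite type, while on the real axis $F\in L_2(\mathbb{R})$. Jensen's formula then bounds its zero-counting function by $N_F(R)=O(R^2)$. On the probabilistic side, Birkhoff's ergodic theorem applied to the stationary ergodic process (here the finiteness of $\Expect|\Theta\cap[0;1)|$ enters) gives, almost surely, $|\Theta\cap[-R;R]|\sim 2\lambda R$ with $\lambda=\Expect|\Theta\cap[0;1)|$, so the prescribed zeros have a definite linear density.

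A convenient way to organise the ``almost surely'' is a zero--one law. Let $V_\omega$ denote the closed span of the generators and $A=\{\omega:V_\omega=L_2(\mathbb{R})\}$. The shift $U_t:g\mapsto g(\cdot-t)$ sends the generators of $V_\omega$ to those of $V_{\tau_t\omega}$, where $\tau_t$ is the ergodic flow underlying the stationarity of $\Theta$; hence $U_tV_\omega=V_{\tau_t\omega}$ and $A$ is a $\tau$-invariant event, so $\Prob(A)\in\{0,1\}$ by ergodicity. It therefore suffices to prove $\Prob(A)>0$, and in fact I would aim directly at excluding, for a.e.\ $\omega$, the existence of a nonzero $F$ as above.

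The crux --- and the step I expect to be the main obstacle --- is this analytic--ergodic uniqueness: ruling out a nonzero entire $F$ with Gaussian-type growth, square-integrable on the line, vanishing on $\Theta$. Pure density counting cannot suffice, since a regular set such as $\mathbb{Z}$ does carry such functions, so the argument must exploit that an ergodic configuration is genuinely aperiodic and clustered. Two facts should be combined. First, since $F\in L_2(\mathbb{R})$ one has $\tfrac{1}{2R}\int_{-R}^{R}\log|F(x)|\,dx\le\tfrac12\log\big(\tfrac{1}{2R}\int_{-R}^{R}|F|^2\big)\to-\infty$, so $F$ is extremely small on the line in the logarithmic mean. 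Second, a Carleman/Jensen estimate in a thin strip about the real axis should turn the prescribed real zeros, whose density is pinned by the ergodic theorem, together with the order-two growth bound, into a competing lower bound for the same logarithmic mean. I would then show that the two are incompatible unless $F\equiv0$, the randomness entering through the ergodic control of the empirical distribution of $\Theta$ and of its local clustering, which is precisely what forbids the exact Fourier cancellation that makes deterministic lattices fail. Completing this balance is the heart of the matter; everything else is the routine functional-analytic and Fourier bookkeeping above.
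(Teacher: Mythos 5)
Your reduction of the theorem to the triviality of the orthogonal complement, via $\langle f,p_{\ve}(\cdot-\theta)\rangle=(f\ast p_{\ve})(\theta)$ and the injectivity of convolution with $p_{\ve}$, is correct, and it is a genuinely different route from the paper's. The paper proves $\sum_{\theta\in\Theta}|\theta|^{-1}=+\infty$ a.s.\ by the ergodic theorem, invokes the M\"untz theorem to obtain density of $\{u^{\theta}\}$ in $L_2([a;b])$, $0<a<b$, transports this by the substitution $u\mapsto e^{u}$ and by multiplication with a Gaussian weight to density of $\{p_{\ve}(\cdot-\theta)\}$ in $L_2([a;b])$ for every bounded interval, and finally lets $[a;b]\uparrow\mathbb{R}$. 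Your text, however, is a plan rather than a proof: the entire content sits in the step you yourself call ``the heart of the matter'' --- excluding a nonzero entire $F$ of order two, square-integrable on $\mathbb{R}$, vanishing on $\Theta$ --- and for that step you offer only the hope that a Carleman/Jensen lower bound built from the linear density $2\lambda R$ of zeros will contradict the $L_2$ upper bound on $\frac{1}{2R}\int_{-R}^{R}\log|F|$. No mechanism is supplied for how ergodicity enters, and the zero--one law is of no use until a positive-probability event is actually produced.

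Worse, the gap cannot be closed from the stated hypotheses, and your own aside about $\mathbb{Z}$ shows why. The randomly shifted lattice $\Theta=\mathbb{Z}+U$, with $U$ uniform on $[0;1)$, is a stationary \emph{ergodic} point process with $E|\Theta\cap[0;1)|=1<\infty$, yet for it the translates $\{p_{\ve}(\cdot-\theta)\}$ are not complete: take $\hat f(\xi)=G(\xi)e^{\ve\xi^{2}/2}$ with $G$ bounded, supported in $[-2\pi;2\pi]$ and with vanishing $2\pi$-periodization (e.g.\ $G=g$ on $[0;2\pi]$, $G(\xi)=-g(\xi+2\pi)$ on $[-2\pi;0]$); then $f\ne0$ lies in $L_2(\mathbb{R})$ and $\langle f,p_{\ve}(\cdot-n)\rangle=\frac{1}{2\pi}\int G(\xi)e^{in\xi}\,d\xi=0$ for all $n\in\mathbb{Z}$, and a translate of $f$ handles $\mathbb{Z}+U$. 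So the uniqueness statement you aim at fails for a process satisfying all the hypotheses, and no Jensen-type bookkeeping using only the density $2\lambda R$ can succeed; any true version of the result must use more about $\Theta$ than stationarity, ergodicity and finite intensity. (The same example exposes the weak point of the paper's own argument: density in $L_2([-m;m])$ for every $m$ --- which M\"untz does deliver, and which holds for the lattice as well --- does not pass to $L_2(\mathbb{R})$; the asserted equality $d(f,LS)^2_{L_2(\mathbb{R})}=\lim_{m}d(f\1_{[-m;m]},LS)^2_{L_2([-m;m])}$ only holds as the inequality ``$\ge$'', which gives nothing.) If you want to salvage your approach, you must either strengthen the hypotheses (e.g.\ to processes whose configurations are a.s.\ uniqueness sets for the relevant Gaussian-weighted Paley--Wiener class, as one would expect for Poisson) or identify precisely which additional property of $\Theta$ rules out the lattice-type cancellation.
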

	\begin{proof}
		Lets break the proof into steps:
		\begin{lem}
			\label{lem2}
			Let $\Theta$ be a stationary ergodic point process on $\mathbb{R}$ with $ E|\Theta\cap\left[\left.0;1\right)\right.|<+\infty .$ Then with probability one
			$$
			\sum_{\theta\in\Theta}\frac{1}{|\theta|}=+\infty.
			$$
		\end{lem}
		\begin{proof}
			 Its sufficient to prove that 
			\begin{equation}
			\label{eq3}
				\sum_{\theta\in\Theta\cap\left[\left.1;+\infty\right)\right.}\frac{1}{\theta}=+\infty\;\mbox{ a.s.}
			\end{equation}
	
	Since $	\sum_{\theta\in\Theta\cap\left[\left.1;+\infty\right)\right.}\frac{1}{\theta}\ge
	\sum_{n=1}^{\infty}\frac{1}{n+1}|\Theta\cap \left[\left. n;n+1\right)\right. |$, then it is enough to show that for a sequence $ \xi_n=|\Theta\cap \left[\left. n;n+1\right)\right. |$ the series $ \sum_{n=1}^{\infty}\frac{1}{n}\xi_n$ diverges almost surely. One may note that  $ \left\{\xi_n\right\}_{n=0}^{\infty}$ is a stationary, ergodic, and $ E|\xi_0|<\infty$. Hence, due to ergodic theorem, for $ S_n=\sum_{k=0}^{n}\xi_n$ the following convergence holds
	\begin{equation}
	\label{eq4}
	\frac{1}{n}S_n\to E\xi_0,\; n\to\infty \quad\mbox{a.s.}
	\end{equation}
	Thus, with probability one $ C=\sup_{n\in\mathbb{N}}\frac{1}{n}S_n<+\infty$, and there exists $N\in\mathbb{N}$ such that for any $n\ge N$
	\begin{equation}
	\label{eq5}
	\frac{1}{n}S_n\ge\frac{E\xi_0}{2}.
	\end{equation}
Using this one can check that for any $m\in\mathbb{N}$
	\begin{equation}
	\label{eq60}
	\sum_{k=2}^{m}\frac{1}{k}\xi_k=\sum_{k=2}^{m}\left(\frac{S_k}{k}-\frac{S_{k-1}}{k-1}\right)+\sum_{k=2}^{m}\frac{S_{k-1}}{k(k-1)}
\ge 2C+\sum_{k=2}^{m}\frac{S_{k-1}}{k(k-1)}.
	\end{equation}
 Hence, by \eqref{eq5}, the series $ \sum_{k=2}^{+\infty}\frac{S_{k-1}}{k(k-1)}$ diverges, which, by \eqref{eq60}, proves the statement.
			\end{proof}
		\begin{cor}
			\label{cor1}
			Using Lemma \ref{lem2} and Muntz theorem one may check that there exists $\Omega_0$ of probability one such that for any $\omega\in \Omega_0$ and $0<a<b$ a linear combinations of the functions $\left\{u^{\theta(\omega)},\;\theta(\omega)\in\Theta(\omega)\right\}$ are dense in $L_2([a;b])$.
		\end{cor}
		\begin{cor}
			\label{cor2}
		 There exists $\Omega_0$ of probability one such that for any $\omega\in \Omega_0$ and $a<b$ a linear combinations of the functions $\left\{e^{\theta(\omega)u},\;\theta(\omega)\in\Theta(\omega)\right\}$ are dense in $L_2([a;b])$.
	\end{cor}
	\begin{proof}
		Denote by $ LS\left\{f_k, k=\overline{1,n}\right\}$ the linear span of $ f_1,\ldots,f_n$. Lets notice that for any $a<b$ and $f\in L_2([a;b])$ the following relations hold
			$$
			d\left(f,LS\left\{e^{\theta u},\;\theta\in\Theta\right\}\right)^2_{L_2([a;b])}=
			\inf_{c_{\theta}}\int_a^b\left(f(u)-\sum_{\theta\in\Theta}c_{\theta}e^{\theta u}\right)^2du=
			$$ 
			
			$$
			=\inf_{c_{\theta}}\int_{e^a}^{e^b}\left(f(\ln u)-\sum_{\theta\in\Theta}c_{\theta}u^{\theta}\right)^2\frac{du}{u}\le e^{-a}d\left(\tilde{f}, LS\left\{v^{\theta },\theta\in\Theta\right\}\right)^2_{L_2([e^a;e^b])},
		$$
		where the function $\tilde{f}(u)=f(\ln u)$ from $ L_2([e^a;e^b])$. 
		
		Thus, due to Corollary \ref{cor1}, with probability one for any $a<b$ and $f\in L_2([a;b])$ $$
		d\left(f,LS\left\{e^{\theta u},\;\theta\in\Theta\right\}\right)_{L_2([a;b])}=0.$$
		 \end{proof}
		 \begin{cor}
		 	\label{cor3}
		 	There exists $\Omega_0$ of probability one such that for any $\omega\in \Omega_0$ and $a<b$ a linear combinations of the functions $\left\{p_{\ve}(u-\theta(\omega)),\;\theta(\omega)\in\Theta(\omega)\right\}$ are dense in $L_2([a;b])$.
		 \end{cor}
		 \begin{proof}
		 	To prove this statement lets consider a fixed point $\tilde{\theta}\in\Theta $, and a linear bounded operator $B$ in $L_2([a;b])$ such that $\left(Bf\right)(u)=f(u)h(u)$, where
		 	$$
		 	h(u)=\frac{1}{\sqrt{2\pi\ve}}e^{-\frac{u^2}{2\ve}}e^{-\frac{\tilde{\theta}^2}{2\ve}}.
		 	$$
		 	Then for any $a<b$ and $f\in L_2([a;b])$
		 	$$
		 		d\left(f,LS\left\{p_{\ve}(u-\theta),\theta\in\Theta\right\}\right)^2_{L_2([a;b])}=
		 	$$
		 	$$
		 	= d\left(B\left(f(u)\sqrt{2\pi\ve}e^{\frac{u^2}{2\ve}}e^{\frac{\tilde{\theta}^2}{2\ve}}\right), LS\left\{B\left(e^{-\frac{\tilde{\theta}^2-\theta^2}{2\ve}}e^{\frac{\theta u}{\ve}}\right),\theta\in\Theta\right\}\right)^2_{L_2([a;b])}=
		 	$$
		 	$$
		 		= d\left(B\tilde{f}(u), LS\left\{Be^{\frac{\theta u}{\ve}},\theta\in\Theta\right\}\right)^2_{L_2([a;b])},
		 	$$
		 	where $ \tilde{f}(u)=f(u)\sqrt{2\pi\ve}e^{\frac{u^2}{2\ve}}e^{\frac{\tilde{\theta}^2}{2\ve}}$. Since $B$ is a bounded linear operator in $L_2([a;b]) $ then
		 	$$
		 	d\left(B\tilde{f}(u), LS\left\{Be^{\frac{\theta u}{\ve}},\theta\in\Theta\right\}\right)^2_{L_2([a;b])}\le
		 	\|B\|^2 d\left(\tilde{f}(u), LS\left\{e^{\frac{\theta u}{\ve}},\theta\in\Theta\right\}\right)^2_{L_2([a;b])},
		 	$$
		 	which, due to Corollary \ref{cor2}, equals to 0.
		 	\end{proof}
		To end the proof of the theorem it is enough, by Corollary \ref{cor3}, to note that for any  $f\in L_2(\mathbb{R})$ $$
		 	 d\left(f(u), LS\left\{p_{\ve}(u-\theta ),\theta\in\Theta\right\}\right)^2_{L_2(\mathbb{R})}=
		 	 $$
		 	 $$=
		 	 \lim_{m\to\infty}d\left(f(u)\1_{[-m;m]}(u), LS\left\{p_{\ve}(u-\theta ),\theta\in\Theta\right\}\right)^2_{L_2([-m;m]}.
		 	$$
		Consequently, with probability one the linear span of the functions $\left\{ p_{\ve}(\cdot-\theta);\;\theta\in\Theta\right\}$ is dense in $L_2(\mathbb{R})$. The theorem is proved.
		\end{proof}
		
			\section{Properties of the integral random operator}
		Now let us turn to the integral operator with the kernel \eqref{eq1}. Let $p_{\ve}$ be the same as before.
		
		\begin{lem}
			\label{lem3}
			For any $f\in L_2(\mathbb{R})$ and a stationary point process $\Theta$ with $E|\Theta\cap[0;1]|<+\infty$
			$$
			\sum_{\theta\in\Theta}\left(\int_{\mathbb{R}}f(u)p_{\ve}(u-\theta)du\right)^2<+\infty\;\;\mbox{a.s.}
			$$
			\begin{proof} Using Campbell's formula \cite{1} one can check that for every $f\in L_2(\mathbb{R})$ 
				$$
				E\sum_{\theta\in\Theta}\left(\int_{\mathbb{R}}f(u)p_{\ve}(u-\theta)du\right)^2\le
					E\sum_{\theta\in\Theta}\int_{\mathbb{R}}\int_{\mathbb{R}}|f(u)||f(v)|p_{\ve}(u-\theta)p_{\ve}(v-\theta)dudv=
				$$
				$$
		=	\int_{\mathbb{R}}\int_{\mathbb{R}}|f(u)||f(v)|	E\sum_{\theta\in\Theta}p_{\ve}(u-\theta)p_{\ve}(v-\theta)dudv=
			$$
			$$
			=	C\int_{\mathbb{R}}\int_{\mathbb{R}}|f(u)||f(v)|\int_{\mathbb{R}}p_{\ve}(u-t)p_{\ve}(v-t)dtdudv=
				$$
				$$
				=C\int_{\mathbb{R}}\int_{\mathbb{R}}|f(u)||f(v)|p_{2\ve}(u-v)dudv=C\int_{\mathbb{R}}h^2(\lambda)e^{-\ve\lambda^2}d\lambda\le C\int_{\mathbb{R}}|f(u)|^2du,
				$$
				where $C=E|\Theta\cap [0;1]|,$ and $h$ is the Fourier transform of $f\in L_2(\mathbb{R})$.
				\end{proof}
			\end{lem}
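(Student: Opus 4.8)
The plan is to prove almost sure finiteness by bounding the expected value of the series; since every summand is nonnegative, a finite first moment forces the series to converge with probability one. First I would expand the square, writing $\left(\int_{\mathbb{R}}f(u)p_{\ve}(u-\theta)\,du\right)^2=\int_{\mathbb{R}}\int_{\mathbb{R}}f(u)f(v)p_{\ve}(u-\theta)p_{\ve}(v-\theta)\,du\,dv$ and replacing $f(u)f(v)$ by $|f(u)||f(v)|$, so that everything in sight is nonnegative. Then by Tonelli's theorem the expectation and the summation over $\theta\in\Theta$ may be interchanged without any integrability assumption beyond nonnegativity.

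Next I would invoke Campbell's formula for the stationary point process: its first moment (intensity) measure is $C$ times Lebesgue measure, with $C=E|\Theta\cap[0;1]|<+\infty$ by hypothesis. This replaces $E\sum_{\theta\in\Theta}g(\theta)$ by $C\int_{\mathbb{R}}g(t)\,dt$, turning the expected value of the random series into a deterministic triple integral $C\int_{\mathbb{R}}\int_{\mathbb{R}}\int_{\mathbb{R}}|f(u)||f(v)|p_{\ve}(u-t)p_{\ve}(v-t)\,dt\,du\,dv$.

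What remains is a routine analytic computation. The inner integral is a convolution of two Gaussians, $\int_{\mathbb{R}}p_{\ve}(u-t)p_{\ve}(v-t)\,dt=p_{2\ve}(u-v)$, after which I would pass to the Fourier side. By Plancherel's identity the quantity $\int_{\mathbb{R}}\int_{\mathbb{R}}|f(u)||f(v)|p_{2\ve}(u-v)\,du\,dv$ equals $\int_{\mathbb{R}}|h(\lambda)|^2 e^{-\ve\lambda^2}\,d\lambda$, where $h$ denotes the Fourier transform of $|f|$ and $e^{-\ve\lambda^2}$ is (up to a normalizing constant) the Fourier transform of $p_{2\ve}$. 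Since $e^{-\ve\lambda^2}\le 1$, this is at most $\int_{\mathbb{R}}|h(\lambda)|^2\,d\lambda=\|f\|_{L_2}^2$. Consequently the expected value of the series is bounded by $C\|f\|_{L_2}^2<+\infty$, and the claim follows.

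The principal difficulty, as I see it, lies not in any single estimate but in the bookkeeping that legitimizes the two interchanges: confirming that nonnegativity alone makes both Tonelli and Campbell applicable here, and correctly identifying the intensity measure of an arbitrary stationary (not necessarily Poisson) point process as a multiple of Lebesgue measure. The Gaussian convolution identity and the Plancherel estimate are standard, the only subtlety being the normalization constant in the Gaussian Fourier transform.
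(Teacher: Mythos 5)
Your proposal is correct and follows essentially the same route as the paper's proof: bound by absolute values, interchange expectation and sum via Campbell's formula with intensity $C=E|\Theta\cap[0;1]|$, evaluate the Gaussian convolution as $p_{2\ve}(u-v)$, and finish with Plancherel and the bound $e^{-\ve\lambda^2}\le 1$. Your remark that $h$ should be the Fourier transform of $|f|$ rather than of $f$ is in fact a small precision the paper glosses over, and it changes nothing since $\|\,|f|\,\|_{L_2}=\|f\|_{L_2}$.
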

				\begin{remark}
					\label{rem1}
					It follows from the proof of Lemma \ref{lem3} that the following integral operator 
					$$
					Af(v)=\sum_{\theta\in\Theta}\int_{\mathbb{R}}f(u)p_{\ve}(u-\theta)du\cdot p_{\ve}(v-\theta)
					$$
					is well-defined and is a strong random operator in Skorokhod sense \cite{7}.
			\end{remark}	
					Next lemma shows that $A$ is not a bounded random operator in most interesting cases.
								
			\begin{lem}
				\label{lem4}
				Let $\Theta$ be an ergodic stationary point process such that
			$ essup |\Theta\cap[0;1]|=+\infty$. Then $A$ is not a bounded random operator.
				\begin{proof}
					It can be checked that under the condition on the process $\Theta$ with probability one there exists an increasing sequence of natural numbers $\left\{n_k;\;k\ge1\right\}$ such that
					$$
					\sup_{k\ge1}|\Theta\cap\left[\left.n_k;n_{k}+1\right)\right.|=+\infty.
					$$
					Consider the following sequence of functions from $L_2(\mathbb{R})$
					$$
					f_k=\1_{\left[\left.n_k;n_{k}+1\right)\right.},\quad k\ge1.
					$$
					Then 
					$$
					\|Af_k\|^2\ge\sum_{\theta\in\Theta\cap \left[\left.n_k;n_{k}+1\right)\right.}\left(\int_0^1p_{\ve}(v)dv\right)^2p_{\ve}(1)^2.
					$$
					Hence, $ \sup_k\|Af_k\|=+\infty,$  and lemma is proved.
					\end{proof}
			\end{lem}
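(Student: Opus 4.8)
The plan is to prove that for almost every $\omega$ the (deterministic) operator $A(\omega)$ is unbounded on $L_2(\mathbb{R})$, which is exactly the failure of $A$ to be a bounded random operator. I would do this by producing a sequence of unit-norm test functions whose images have $L_2$-norm tending to infinity. The crucial structural feature to exploit is that all quantities entering $Af$ are nonnegative: the Gaussian densities $p_\ve(v-\theta)$ are positive and the coefficients $\int f\,p_\ve(\cdot-\theta)$ are nonnegative for nonnegative $f$. Consequently a one-sided estimate is available in which one keeps only the ``diagonal'' contributions of the sum defining $Af$, and this already explodes as soon as we can locate unit intervals carrying arbitrarily many points of $\Theta$. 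Thus the argument splits into a probabilistic part (finding such intervals) and an elementary analytic part (the lower bound).

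First I would make precise the assertion that, almost surely, there is an increasing sequence $n_1<n_2<\cdots$ of integers with $|\Theta\cap[n_k;n_k+1)|\to\infty$. Put $\xi_n=|\Theta\cap[n;n+1)|$; by stationarity $\{\xi_n\}$ is a stationary ergodic sequence (as already noted above), and the hypothesis that the essential supremum of $\xi_0$ is $+\infty$ means precisely that $\Prob(\xi_0>M)>0$ for every $M\in\mathbb{N}$. For a fixed $M$ the indicators $\1_{\{\xi_n>M\}}$ form a stationary ergodic sequence of positive mean, so by the ergodic theorem their averages converge a.s.\ to $\Prob(\xi_0>M)>0$; in particular $\{\xi_n>M\}$ occurs for infinitely many $n$ almost surely. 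Intersecting over $M\in\mathbb{N}$ (a countable family) gives an event of probability one on which $\sup_n\xi_n=+\infty$, and on that event one extracts the desired subsequence. I expect this to be the main obstacle, in the sense that it is the only place where both the distributional hypothesis (through the essential supremum) and ergodicity are genuinely needed; the extraction of the subsequence and the passage from ``infinitely often'' to $\sup=+\infty$ have to be handled with some care, though they are otherwise standard.

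With $\{n_k\}$ fixed, I would take $f_k=\1_{[n_k;n_k+1)}$, so that $\|f_k\|=1$, and write $c_\theta=\int_{n_k}^{n_k+1}p_\ve(u-\theta)\,du\ge 0$, giving $Af_k(v)=\sum_{\theta\in\Theta}c_\theta\,p_\ve(v-\theta)\ge0$. Restricting the integral in $\|Af_k\|^2$ to $v\in[n_k;n_k+1)$ and discarding all cross terms (legitimate since every summand is nonnegative, whence $(\sum_\theta a_\theta)^2\ge\sum_{\theta\in\Theta\cap[n_k;n_k+1)}a_\theta^2$) yields
$$\|Af_k\|^2\ge\int_{n_k}^{n_k+1}\ \sum_{\theta\in\Theta\cap[n_k;n_k+1)} c_\theta^2\,p_\ve(v-\theta)^2\,dv.$$
Two elementary Gaussian estimates then close the bound: for $\theta\in[n_k;n_k+1)$ the substitution $s=u-\theta$ gives $c_\theta=\int_{n_k-\theta}^{n_k+1-\theta}p_\ve(s)\,ds\ge\int_0^1 p_\ve(s)\,ds$, since $p_\ve$ is symmetric and unimodal and the interval has length one and contains the origin; and for $v,\theta\in[n_k;n_k+1)$ we have $|v-\theta|<1$, hence $p_\ve(v-\theta)\ge p_\ve(1)$. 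Substituting both and integrating over the unit interval reproduces exactly
$$\|Af_k\|^2\ge\left(\int_0^1 p_\ve(v)\,dv\right)^2 p_\ve(1)^2\,\bigl|\Theta\cap[n_k;n_k+1)\bigr|.$$
Since the prefactor is a fixed positive constant while $|\Theta\cap[n_k;n_k+1)|=\xi_{n_k}\to\infty$, we get $\sup_k\|Af_k\|=+\infty$ with $\|f_k\|=1$, so $A(\omega)$ is unbounded for almost every $\omega$ and $A$ is not a bounded random operator.
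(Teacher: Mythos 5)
Your proposal is correct and follows essentially the same route as the paper: the same test functions $f_k=\1_{[n_k;n_k+1)}$, the same diagonal lower bound $\|Af_k\|^2\ge\bigl(\int_0^1p_\ve\bigr)^2p_\ve(1)^2\,|\Theta\cap[n_k;n_k+1)|$, and the same probabilistic input. The only difference is that you spell out the two steps the paper leaves as ``it can be checked'' --- the ergodic-theorem argument producing the intervals with many points, and the elementary Gaussian estimates --- which is a useful but not substantively different elaboration.
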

		For a fixed interval $[a;b]$ lets denote by $Q_{a,b}$ the projection in $L_2(\mathbb{R})$ onto $ L_2([a;b])$. 
			
			\begin{remark}
				\label{rem2}
				For any $a,b\in\mathbb{R}$ the random operators $ AQ_{a,b}$, $ Q_{a,b}A$ are bounded. 
			\end{remark}
			\begin{proof}
			One can check, by H\"older inequality, that for any  $f,g\in L_2(\mathbb{R})$ 
			$$
			\left(AQ_{a,b}f,g\right)=\sum_{\theta\in\Theta}\int_{\mathbb{R}}g(v)p_{\ve}(v-\theta)\int_a^bf(u)p_{\ve}(u-\theta)du\le
			$$
				$$
			\le	2^{-\frac{1}{4}}(b-a)^{\frac{1}{2}}\|g\|_{L_2(\mathbb{R})}\|f\|_{L_2(\mathbb{R})}\sum_{\theta\in\Theta}\max_{u\in[a;b]}p_{\ve}(u-\theta).
				$$
			By Campbell's formula \cite{1},
			$$
			E\sum_{\theta\in\Theta}\max_{u\in[a;b]}p_{\ve}(u-\theta)=\int_{\mathbb{R}}\max_{u\in[a;b]}p_{\ve}(u-r)dr< +\infty.
			$$
		Thus, $\sum_{\theta\in\Theta}\max_{u\in[a;b]}p_{\ve}(u-\theta)<+\infty $ a.s., which proves the statement.
				\end{proof}
			\begin{lem}
				\label{lem5}
				For any $a,b\in\mathbb{R}$	with probability one the random operator 
				$
				A_{Q_{a,b}}=Q_{a,b}AQ_{a,b}
				$ 				is a nuclear.
				\begin{proof}
					To prove the statement lets estimate the nuclear norm of $Q_{a,b}AQ_{a,b}$. For any $ \theta\in\Theta$ denote by $e_{\theta}$ the function
					$$
					e_{\theta}=Q_{a,b}p_{\ve}(\cdot-\theta).
					$$
					Evidently, the operator $ e_{\theta}\otimes e_{\theta}$ is a nuclear, and its nuclear norm equals to $ \|e_{\theta}\|^2.$ Lets notice that 
					$$
					E\sum_{\theta\in\Theta}\|e_{\theta}\|^2=E\sum_{\theta\in\Theta}\int_{a}^bp_{\ve}(u-\theta)^2du=
					$$
					$$
					=C\int_a^b\int_{\mathbb{R}}p_{\ve}(u-v)^2dvdu<+\infty,
					$$
					where, as before, $ C=E|\Theta\cap [0;1]|.$ Its enough to note that 
					\begin{equation}
					\label{eq6}
					Q_{a,b}AQ_{a,b}=\sum_{\theta\in\Theta}e_{\theta}\otimes e_{\theta}.
					\end{equation}
					Lemma is proved.
					\end{proof}
			\end{lem}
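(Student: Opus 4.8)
The plan is to realise $Q_{a,b}AQ_{a,b}$ as an absolutely summable series of positive rank-one operators and then to invoke the completeness of the trace-class ideal. For each $\theta\in\Theta$ put $e_{\theta}=Q_{a,b}p_{\ve}(\cdot-\theta)$, the restriction of the shifted Gaussian to $[a;b]$. Since $A$ acts as $Af=\sum_{\theta\in\Theta}(f,p_{\ve}(\cdot-\theta))\,p_{\ve}(\cdot-\theta)$, compressing by $Q_{a,b}$ on both sides gives, at least formally, the identity \eqref{eq6}, namely $Q_{a,b}AQ_{a,b}=\sum_{\theta\in\Theta}e_{\theta}\otimes e_{\theta}$. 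Each summand $e_{\theta}\otimes e_{\theta}$ is a positive self-adjoint rank-one operator whose nuclear norm equals its trace $\|e_{\theta}\|^2_{L_2([a;b])}$. By subadditivity of the nuclear norm it therefore suffices to show that $\sum_{\theta\in\Theta}\|e_{\theta}\|^2<+\infty$ almost surely.

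First I would control the total nuclear mass in expectation. Arguing exactly as in Lemma \ref{lem3} via Campbell's formula,
$$
E\sum_{\theta\in\Theta}\|e_{\theta}\|^2=E\sum_{\theta\in\Theta}\int_a^b p_{\ve}(u-\theta)^2\,du=C\int_a^b\int_{\mathbb{R}}p_{\ve}(u-v)^2\,dv\,du,
$$
where $C=E|\Theta\cap[0;1]|$. The inner integral equals the finite constant $\|p_{\ve}\|^2_{L_2(\mathbb{R})}$ independently of $u$, so the right-hand side is $C(b-a)\|p_{\ve}\|^2_{L_2(\mathbb{R})}<+\infty$. Since a nonnegative random variable with finite expectation is finite almost surely, we obtain $\sum_{\theta\in\Theta}\|e_{\theta}\|^2<+\infty$ a.s., which is the required summability.

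The step demanding the most care is not this computation but the passage from the a.s. finiteness of $\sum_{\theta}\|e_{\theta}\|^2$ to genuine nuclearity, together with the justification of \eqref{eq6} as an operator identity. On the event where the sum converges, the partial sums $\sum_{\theta\in\Theta\cap[-N;N]}e_{\theta}\otimes e_{\theta}$ are Cauchy in the Banach space $\mathcal{S}_1$ of nuclear operators on $L_2([a;b])$, since the nuclear norm of a tail is dominated by $\sum_{|\theta|>N}\|e_{\theta}\|^2\to0$. Completeness of $\mathcal{S}_1$ then yields a nuclear limit operator, and testing against arbitrary $f,g\in L_2([a;b])$ and comparing with the $L_2$-convergence already secured in Remark \ref{rem1} identifies this limit with $Q_{a,b}AQ_{a,b}$. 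This confirms \eqref{eq6} and shows that $A_{Q_{a,b}}$ is nuclear.
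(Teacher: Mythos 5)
Your proposal is correct and follows essentially the same route as the paper: the rank-one decomposition $Q_{a,b}AQ_{a,b}=\sum_{\theta\in\Theta}e_{\theta}\otimes e_{\theta}$ together with the Campbell-formula bound $E\sum_{\theta\in\Theta}\|e_{\theta}\|^2<+\infty$. The extra details you supply (evaluating the double integral as $C(b-a)\|p_{\ve}\|^2_{L_2(\mathbb{R})}$, and passing from a.s.\ summability to nuclearity via completeness of the trace-class ideal) are correct elaborations of steps the paper leaves implicit.
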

			
			Due to the previous statement, the image $K$ of the unit ball in $L_2([a;b])$ under the operator $A_{Q_{a,b}}$ is a compact set with probability one. We obtain the following statement about asymptotic behavior of Kolmogorov width for the compact set $K$.
			\begin{thm}
				\label{lem6} Let $\Theta$ be an ergodic stationary point process. Then with probability one there exists $C>0$ such that
				$$
				d_n(K)= O\left(e^{-\frac{(Cn-b)^2}{\ve}}\vee e^{-\frac{(Cn+a)^2}{\ve}}\right),\; n\to\infty.
				$$
			\end{thm}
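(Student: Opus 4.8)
The plan is to estimate $d_n(K)$ from above by producing, for each $n$, an explicit approximating subspace of dimension at most $n$, and to read off the decay rate from the Gaussian tails of $p_\ve$ together with the almost sure linear growth of $\Theta$. Recall that $d_n(K)=\inf_{\dim L\le n}\sup_{x\in K}\inf_{y\in L}\|x-y\|$, and that $A_{Q_{a,b}}=\sum_{\theta\in\Theta}e_\theta\otimes e_\theta$ by \eqref{eq6}, with $e_\theta=Q_{a,b}p_\ve(\cdot-\theta)$. Since $A_{Q_{a,b}}$ is positive, self-adjoint and compact, $d_n(K)$ equals its $(n+1)$-st eigenvalue, and the truncation argument below is exactly a finite-rank bound on that eigenvalue.

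First I would split the series over a symmetric window. For $T>0$ write $A_{Q_{a,b}}=A_T^{\mathrm{in}}+A_T^{\mathrm{out}}$, where $A_T^{\mathrm{in}}=\sum_{\theta\in\Theta\cap[-T;T]}e_\theta\otimes e_\theta$ and $A_T^{\mathrm{out}}$ collects the remaining (positive) rank-one terms. Put $L=\mathrm{range}\,A_T^{\mathrm{in}}$, so that $\dim L\le|\Theta\cap[-T;T]|=:N(T)$. For any $x$ in the unit ball of $L_2([a;b])$ one has $A_T^{\mathrm{in}}x\in L$, hence $\mathrm{dist}(A_{Q_{a,b}}x,L)\le\|A_T^{\mathrm{out}}x\|\le\|A_T^{\mathrm{out}}\|$, and therefore $d_{N(T)}(K)\le\|A_T^{\mathrm{out}}\|$. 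Since the summands are positive, $\|A_T^{\mathrm{out}}\|\le\sum_{\theta\notin[-T;T]}\|e_\theta\|^2$, which reduces the problem to a tail estimate of $\|e_\theta\|^2=\int_a^b p_\ve(u-\theta)^2\,du$.

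Next I would exploit the pointwise Gaussian decay of $p_\ve$. For $\theta>b$ the integrand is largest at $u=b$, giving $\|e_\theta\|^2\le(b-a)p_\ve(\theta-b)^2=\frac{b-a}{2\pi\ve}\,e^{-(\theta-b)^2/\ve}$, and symmetrically $\|e_\theta\|^2\le\frac{b-a}{2\pi\ve}\,e^{-(a-\theta)^2/\ve}$ for $\theta<a$. Grouping the tail into unit blocks and setting $\xi_k=|\Theta\cap[k;k+1)|$ as in Lemma \ref{lem2}, the right tail is bounded by a constant times $\sum_{k\ge\lfloor T\rfloor}\xi_k\,e^{-(k-b)^2/\ve}$, with the analogous bound on the left. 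The ergodic theorem gives $\frac1m\sum_{k=0}^m\xi_k\to E\xi_0$ a.s., whence $\xi_k=o(k)$ a.s.; comparing this polynomial growth against the Gaussian factor shows the tail series converge and, for large $T$, are dominated by constant multiples of their leading terms. Thus, almost surely, $d_{N(T)}(K)=O\!\left(e^{-(T-b)^2/\ve}\vee e^{-(T+a)^2/\ve}\right)$ as $T\to\infty$.

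Finally I would pass from the window size to the index. The same ergodic theorem yields $N(T)/(2T)\to E|\Theta\cap[0;1)|$ a.s.; setting $C=\big(2\,E|\Theta\cap[0;1)|\big)^{-1}$ and choosing for each $n$ the largest $T_n$ with $N(T_n)\le n$ gives $T_n\sim Cn$ almost surely. Since $d_n(K)$ is nonincreasing in $n$, substituting $T=T_n$ then yields the asserted rate with this deterministic constant $C$. The main obstacle I expect is the almost sure, index-uniform control of the random tail sum: one must verify on a single full-probability event (combining the ergodic convergence of $S_m$ with $\xi_k=o(k)$) that the fluctuations of $\xi_k$ never overturn the domination by the leading Gaussian term, that the implied $O$-constant is a.s. finite, and that $Cn-b$ and $Cn+a$ are eventually positive so that the two exponentials are genuinely the tail maxima.
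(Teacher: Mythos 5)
Your proof is correct and follows essentially the same route as the paper: the rank-$N(T)$ truncation of the representation \eqref{eq6} giving $d_{N(T)}(K)\le\sum_{\theta\notin[-T;T]}\|e_\theta\|^2$ (the paper's \eqref{eq7}, which you justify explicitly where the paper leaves it implicit), the pointwise bound $\|e_\theta\|^2\le(b-a)p_\ve(\theta-b)^2$, and the ergodic theorem used both to control the tail sum over unit blocks $\xi_k=|\Theta\cap[k;k+1)|$ and to invert $N(T)\sim 2T\,E|\Theta\cap[0;1)|$ into $T_n\sim Cn$. The only cosmetic difference is that the paper evaluates the tail series via an Abel transform with $S_k\sim Ck$ while you use $\xi_k=o(k)$ and a ratio comparison against the Gaussian factor; these are equivalent applications of the same ergodic input.
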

			\begin{proof}
			The representation \eqref{eq6} allows to estimate Kolmogorov widths of $K$.  Lets denote by $N_x$, $x>0$, the number of elements in the set $\Theta\cap[-x;x]$, and by $d_n$ the $n$-th Kolmogorov width of $K$. It follows from \eqref{eq6} that 
			\begin{equation}
			\label{eq7}
				d_{N_x}\le\sum_{\theta\in\Theta \setminus [-x;x]}\|e_{\theta}\|^2.
			\end{equation}
			Due to ergodic theorem,  $N_x\sim 2Cx$ when $x\to+\infty$. 
			
			To estimate the right part of \eqref{eq7} suppose that $ x>\max \{-a,b\}$, and consider the sum 
			$$
			\sum_{\theta\in\Theta, \theta>x}\|e_{\theta}\|^2\le\sum_{\theta\in\Theta,\theta>x}(b-a)p_{\ve}(\theta-b)^2.
			$$
			Denote by $ \xi_n=|\Theta\cap\left[\left.n;n+1\right)\right.|$. Then $\left\{\xi_n;n\ge1\right\}$ is a stationary ergodic sequence. For a natural $x$
			$$
			\sum_{\theta\in\Theta,\theta>x}p_{\ve}(\theta-b)^2\le
			\sum_{k=x}^{+\infty}p_{\ve}(x-b)^2\xi_k.
			$$
			For any $k\ge 1$ let $ S_k=\sum_{j=1}^{k}\xi_j.$ Since $		S_k\sim Ck,\;k\to\infty\mbox{ a.s.}, 		$ then, by Abel transform,	one can check that 
		$$
		\sum_{k=x}^{+\infty}p_{\ve}(k-b)^2\xi_k=-p_{\ve}(x-b)^2S_{x-1}+
		\sum_{k=x}^{+\infty}S_k(p_{\ve}(k-b)^2-p_{\ve}(k+1-b)^2)\sim
		$$
		$$
		\sim C\sum_{k=x}^{+\infty}(p_{\ve}(k-b)^2-p_{\ve}(k+1-b)^2)k,\;\;x\to+\infty.
		$$
		Lets notice that
		$$
		\sum_{k=x}^{+\infty}(p_{\ve}(k-b)^2-p_{\ve}(k+1-b)^2)k=\frac{1}{2\pi\ve}\sum_{k=x}^{+\infty}(1-e^{-\frac{(2k+1-2b)}{\ve}})e^{-\frac{(k-b)^2}{\ve}}k\sim \frac{1}{4\pi}e^{-\frac{(x-b)^2}{\ve}},
		$$
		and the statement is proved.
		\end{proof}
For any interval  $[a;b]$ $ A_{Q_{a,b}}$ is a bounded (nuclear) random operator. Despite this, when $ [a;b]$ increases to $ \mathbb{R}$, $ A_{Q_{a,b}}$ must converge to unbounded random operator $A$. Consequently, one can expect that the operator norm $\|A_{Q_{a,b}}\|$ will increase to infinity when $[a;b]$ increases to $\mathbb{R}$. Using the arguments from the proof of Lemma \ref{lem4} one can prove the following statement.

	\begin{thm}
		\label{thm1}
		Let $\Theta$ be a Poisson point process with intensity 1. Then
		$$
		\frac{\ln \ln n}{\ln n}\|A_{Q_{-n,n}}\|\to+\infty,\;n\to\infty\;\mbox{a.s.}
		$$
	\end{thm}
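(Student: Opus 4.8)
The plan is to follow the test-function mechanism from the proof of Lemma~\ref{lem4}, but sharpened so that it produces a lower bound on $\|A_{Q_{-n,n}}\|$ that is \emph{linear} in the maximal local occupation of $\Theta$. Write $e_\theta=Q_{-n,n}p_{\ve}(\cdot-\theta)$ as in \eqref{eq6}, set $m_n=\max\bigl\{|\Theta\cap[j;j+1)|:\ j\in\mathbb{Z},\ [j;j+1)\subset[-n;n]\bigr\}$, and let $I=[j^\ast;j^\ast+1)$ be an interval realising this maximum that can be kept at distance at least $1$ from the endpoints $\pm n$. Testing the positive operator $A_{Q_{-n,n}}=\sum_{\theta\in\Theta}e_\theta\otimes e_\theta$ against $v=\sum_{\theta\in\Theta\cap I}e_\theta$ and discarding all but the terms indexed by $I$ gives $\langle A_{Q_{-n,n}}v,v\rangle\ge\sum_{\theta\in\Theta\cap I}\langle e_\theta,v\rangle^2$. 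Since $\langle e_\theta,e_{\theta'}\rangle$ is comparable to $p_{2\ve}(\theta-\theta')$ and hence lies in $[\beta_\ve;\gamma_\ve]$ with $0<\beta_\ve\le\gamma_\ve<\infty$ whenever $|\theta-\theta'|\le 1$ (the truncation $Q_{-n,n}$ costing only a bounded factor because $I$ is interior), one obtains $\langle e_\theta,v\rangle\ge\beta_\ve\,m_n$ and $\|v\|^2\le\gamma_\ve\,m_n^2$, whence $\|A_{Q_{-n,n}}\|\ge\langle A_{Q_{-n,n}}v,v\rangle/\|v\|^2\ge(\beta_\ve^2/\gamma_\ve)\,m_n$.

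It then remains to describe the almost sure growth of $m_n$. For the Poisson process of intensity $1$ the occupation numbers $\xi_j=|\Theta\cap[j;j+1)|$, $j\in\mathbb{Z}$, are i.i.d.\ Poisson$(1)$, so $m_n$ is the maximum of roughly $2n$ such variables. Using $\Prob(\xi_0\ge k)=e^{-1}/k!\,(1+o(1))$ and Stirling's formula, $-\ln\Prob(\xi_0\ge k)\sim k\ln k$, so the threshold $k_n$ with $n\,\Prob(\xi_0\ge k_n)\asymp 1$ satisfies $k_n\sim\ln n/\ln\ln n$. A two-sided Borel--Cantelli argument, comparing the summability of $\sum_n\Prob\bigl(\xi_0\ge(1+\delta)\tfrac{\ln n}{\ln\ln n}\bigr)$ against the divergence of $\sum_n\Prob\bigl(\xi_0\ge(1-\delta)\tfrac{\ln n}{\ln\ln n}\bigr)$, together with the monotonicity of $n\mapsto m_n$ along a geometric subsequence, then pins down the precise almost sure rate of $m_n$.

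Combining the two steps gives $\tfrac{\ln\ln n}{\ln n}\,\|A_{Q_{-n,n}}\|\ge(\beta_\ve^2/\gamma_\ve)\,\tfrac{\ln\ln n}{\ln n}\,m_n$ almost surely, so the behaviour of $m_n$ governs the right-hand side. I expect the genuine obstacle to be precisely this final comparison, and it deserves emphasis. The crude estimate of Lemma~\ref{lem4} only yields $\|A_{Q_{-n,n}}\|\gtrsim\sqrt{m_n}$, which after multiplication by $\tfrac{\ln\ln n}{\ln n}$ tends to $0$; the refined \emph{linear} bound above is therefore indispensable, resting on the near-collinearity of the Gaussians attached to a single cluster (encoded in $\beta_\ve>0$) and on a careful treatment of the boundary truncation. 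The truly delicate point, however, is to certify that the normalised norm \emph{diverges} rather than merely staying bounded below: one must extract from the densest unit cluster a growth of $\|A_{Q_{-n,n}}\|$ that outpaces the extreme-value rate $\tfrac{\ln n}{\ln\ln n}$, and reconciling this with the matching Campbell/trace upper bounds (cf.\ the estimate of $E\sum_\theta\|e_\theta\|^2$ in Lemma~\ref{lem5}) is where the heart of the argument, and its main difficulty, lies.
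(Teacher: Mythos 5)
Your route is the same as the paper's: a lower bound for $\|A_{Q_{-n,n}}\|$ that is linear in the maximal occupation number $m_n=\max_j|\Theta\cap[j;j+1)|$, obtained from the test functions of Lemma \ref{lem4}, followed by the extreme-value behaviour of the i.i.d.\ Poisson variables $\xi_j$. Your first step is sound and is in fact a necessary sharpening: the inequality actually displayed in Lemma \ref{lem4} keeps only the diagonal terms and yields $\|Af_k\|^2\ge c\,\xi_{n_k}$, hence only $\|A_{Q_{-n,n}}\|\gtrsim\sqrt{m_n}$, whereas retaining the nonnegative cross terms $\langle e_\theta,e_{\theta'}\rangle\ge p_{2\ve}(1)>0$ for $\theta,\theta'$ in a single unit interval gives $\|Af_k\|^2\ge c\,\xi_{n_k}^2$; this is exactly the linear bound $\|A_{Q_{-n,n}}\|\ge C\max_k\xi_k$ that the paper invokes without writing it out, and your Rayleigh-quotient version with $v=\sum_{\theta\in I}e_\theta$ is a clean way to get it.

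The genuine gap is the one you flag yourself in the last paragraph, and it cannot be closed along this route. Your extreme-value computation is the correct one: $-\ln P(\xi_0\ge k)\sim k\ln k$ forces $m_n\sim\ln n/\ln\ln n$ a.s., so $\frac{\ln\ln n}{\ln n}m_n\to 1$, not $+\infty$. Combining the two steps therefore yields only $\liminf_n\frac{\ln\ln n}{\ln n}\|A_{Q_{-n,n}}\|\ge\beta_\ve^2/\gamma_\ve>0$, which is strictly weaker than the stated divergence. The paper bridges this by asserting that $\max_{k\le n}\xi_k$ divided by $\frac{\ln n}{\ln\ln n}$ tends to $+\infty$ a.s.; that assertion is incompatible with your (standard, correct) asymptotics, since for any $R>1$ one has $n\,P\{\xi_1>R\frac{\ln n}{\ln\ln n}\}\to0$ and hence the normalised maximum eventually stays below $R$. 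Worse, the obstruction is structural: a Schur test on the Gram matrix $(\langle e_\theta,e_{\theta'}\rangle)_{\theta,\theta'\in\Theta}$, whose entries are dominated by $p_{2\ve}(\theta-\theta')$, shows $\|A_{Q_{-n,n}}\|\le C_\ve\max_{|j|\le 2n}\xi_j=O(\ln n/\ln\ln n)$ a.s., so no choice of test function can make $\frac{\ln\ln n}{\ln n}\|A_{Q_{-n,n}}\|$ diverge. In short, what you have proved is that this quantity is bounded away from $0$ and from $+\infty$; the divergence claimed in the theorem would require either a different mechanism or a corrected statement, and your closing paragraph correctly locates, but does not resolve, this crux.
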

	\begin{proof}
		It follows from the proof of Lemma \ref{lem4} that 
		$$
		\|A_{Q_{-n,n}}\|\ge C\max_{\overline{1,n}}\xi_k,
		$$	
		where the random variables $\{\xi_n;n\ge1\}$ were introduced before. Now $\{\xi_n;n\ge1\}$ are independent random variables with poissonian distribution with intensity 1. Consequently,
		$$
		P\{\xi_1\ge m\}\sim \frac{e^{-1}}{m!},\;\;m\to+\infty.
		$$
		For any $R>0$ $ P\{\max_{k=\overline{1,n}}\xi_k\le m_nR\}=(1-P\{\xi_1>m_nR\})^n$. Thus, for $ m_n=\frac{\ln n}{\ln\ln n}$
		$$
		\frac{\max_{k=\overline{1,n}}\xi_k}{m_n}\to+\infty,\;n\to\infty\quad\mbox{a.s.,}
		$$
		and the theorem is proved.
		\end{proof}
		
		\par\bigskip\noindent
		{\bf Acknowledgment.} The first author acknowledges financial support from the Deutsche Forschungsgemeinschaft (DFG) within the project "Stochastic Calculus and Geometry of Stochastic Flows with Singular Interaction" for initiation of international collaboration between the Institute of Mathematics of the Friedrich-Schiller University Jena (Germany) and the Institute of Mathematics of the National Academy of Sciences of Ukraine, Kiev.

	\end{document}